\theoremstyle{plain}
\newtheorem{theorem}{Theorem}
\newtheorem{corollary}{Corollary}
\newtheorem{lemma}{Lemma}
\newtheorem{definition}{Definition}
\theoremstyle{definition}
\newtheorem{example}{Example}
\newtheorem{remark}{Remark}
\newcommand{\F}{\mathcal{F}}
\newcommand{\Poly}{\mathcal{P}}
\newcommand{\cgeom}{\mathcal{L}}
\DeclareMathOperator{\ExtHull}{ExtConv}
\DeclareMathOperator{\Hull}{Conv}
\DeclareMathOperator{\Pos}{Pos}
\DeclareMathOperator{\cdim}{cdim}
\begin{document}

\title{Embedding convex geometries and a bound on convex dimension\footnote{The authors would like to thank Kira Adaricheva and Paul Edelman for helpful comments and suggestions.}}

\begin{abstract}
	The notion of an abstract convex geometry, due to \citet*{EJ}, offers an abstraction of the standard notion of convexity in a linear space.  \citet*{KNO} introduce the notion of a generalized convex shelling into $\mathbb{R}^N$ and prove that a convex geometry may always be represented with such a shelling. We provide a new, shorter proof of their result using a representation theorem of \cite{EJ} and deduce a different upper bound on the dimension of the shelling. Furthermore, in the spirit of \citet*{C}, who shows that any 2-dimensional convex geometry may be embedded as circles in $\mathbb{R}^2$, we show that any convex geometry may be embedded as convex polygons in $\mathbb{R}^2$.
\end{abstract}
\begin{keyword}
convex geometry \sep generalized convex shelling \sep convex dimension \sep convex polygon embedding \sep \MSC[2008]  Primary 52A01; Secondary 52A40, 05B25, 52B22.
\end{keyword}

\author[mr]{Michael Richter\corref{cor}}
\address[mr]{Department of Economics, Royal Holloway, University of London, Egham, UK.}
\ead{michael.richter@rhul.ac.uk}
\author[lgr]{Luke G. Rogers}
\address[lgr]{Department of Mathematics, University of Connecticut, Storrs, CT, 06269-3009, USA.}
\ead{rogers@math.uconn.edu}
\cortext[cor]{Corresponding author}

\maketitle

\section{Introduction}

One of the clearest and earliest expositions of the theory of convex geometries is found in \cite{EJ}. There, one may find several equivalent definitions of what it means for a collection of sets to constitute an abstract convex geometry. One is given below in Definition \ref{defn:cgeom} stating that the collection of sets is i) closed under intersection, ii) includes both the empty set and the entire set, and iii) given any set in the collection, there is an element outside of it which may be added to that set and yields another member of the collection. Another equivalent formulation is given in terms of the anti-exchange property, which states that if one is given two elements and a set in the collection containing neither of those elements, then there is a larger set in the collection containing exactly one of those elements. That is, it is possible to include one of the external elements (perhaps along with additional other elements) without including the other. This is the same anti-exchange property that appears in antimatroid theory, and there is a clear equivalence between convex geometries and antimatroids. Additionally, \citet*{E80} shows that every convex geometry is a meet-distributive lattice and vice-versa. In fact, there are several other equivalent formulations (for surveys, see \citet*{S}, \citet*{AC}, \citet*{C14} and \citet*{AN}).  Finally, we note that convex geometries have proven useful in economics in the study of choice (\citet*{JD96,JD01,JD05}, \citet*{K}) and of abstract economic equilibrium (\citet*{RR}).

\medskip

In their expansive study of convex geometries, \citet*{AGT} provide a series of problems of which one, problem 3, inquires about subclasses of join-semidistributive lattices which are relatively embeddable into the lattice of convex sets of $\mathbb{R}^N$. Partial solutions were achieved by \citet*{A} and \citet*{WS} particularly for \emph{lower bounded} lattices. More recently, \cite{KNO} have provided a solution by showing that any convex geometry can be represented as a ``generalized convex shelling'', meaning that there is an embedding in $\mathbb{R}^N$ so that each set in the geometry is convex if and only if its embedding is convex with respect to a fixed external set of points in $\mathbb{R}^N$.  In this paper we provide an alternate proof of this representation theorem using a representation of \cite{EJ} which represents a convex geometry through a collection of orderings.  An important feature of this new proof is that it yields an upper bound on the dimension of the smallest Euclidean space into which a convex geometry may be embedded via a generalized convex shelling which may be different to the upper bound found by~\cite{KNO}. We then further use the representation theorem of \cite{EJ} to provide a new result, that any convex geometry may be embedded as convex polygons in $\mathbb{R}^2$.

\begin{definition}\label{defn:cgeom}
Let $E$ be a finite set containing $N$ points.  A convex geometry on $E$ is a collection $\cgeom$ of subsets (called convex sets) of $E$ with the following  properties.
\begin{enumerate}
\item $\emptyset$ and $E$ are in $\cgeom$
\item If $X,Y\in\cgeom$ then $X\cap Y\in\cgeom$
\item If $X\in\cgeom\setminus\{E\}$ then there is $e\in E\setminus X$ so $X\cup\{e\}\in\cgeom$\end{enumerate}
\end{definition}

Convex geometries $\cgeom_{1}$ on $E_{1}$ and $\cgeom_{2}$ on $E_{2}$ are isomorphic if there is a bijection $\psi :E_{1}\to E_{2}$ so $\psi(X)\in\cgeom_{2}$ if and only if $X\in\cgeom_{1}$.

\medskip

The following definition of a generalized convex shelling for a set $E\subset\mathbb{R}^{n}$ is due to \cite{KNO}.

\begin{definition}\label{defn:cvxshelling}
Using $\Hull(Q)$ to denote the convex hull, let $G,Q\subset\mathbb{R}^{n}$ be finite sets such that $G\cap\Hull(Q)=\emptyset$. The generalized convex shelling on $G$ with respect to $Q$ is
\begin{equation*}
	\cgeom = \bigl\{ X\subset G: \Hull(X\cup Q)\cap G=X\bigr\}.
	\end{equation*}
\end{definition}
It is easily verified that this defines a convex geometry.  In~\cite{KNO} the converse is proved.
\begin{theorem}[\protect{\cite{KNO} Theorem~2.8}]\label{thm:KNO}
Any convex geometry is isomorphic to a generalized convex shelling.
\end{theorem}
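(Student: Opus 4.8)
The plan is to discard the explicit geometric construction of \cite{KNO} and instead extract both the embedding and a dimension bound from the ordering representation of \cite{EJ}. First I would recall that every maximal chain in $\cgeom$ has length $N$, since each application of property (3) adjoins a single point, so a maximal chain $\emptyset = X_0 \subset X_1 \subset \cdots \subset X_N = E$ is the same thing as a linear ordering of $E$; call such an ordering \emph{compatible}. The representation theorem of \cite{EJ} asserts that $X \in \cgeom$ if and only if $X$ is an initial segment of some compatible ordering (one direction is immediate; for the other, build a chain up to $X$ inside the trace $\cgeom|_X$, which is again a convex geometry, and then extend it to $E$ using property (3)). Thus $\cgeom$ is exactly the union of the initial segments of its compatible orderings, and I would fix a minimal family $\sigma_1,\dots,\sigma_m$ whose initial segments already exhaust $\cgeom$. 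The least such $m$ is the convex dimension $\cdim(\cgeom)$, and it is this number that I expect to govern the dimension of the target space and so to furnish the announced bound.

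Next I would embed $E$ into $\mathbb{R}^{m}$ by sending each $e$ to the vector recording its position in $\sigma_1,\dots,\sigma_m$ (or a strictly convex function thereof), so that each ordering is witnessed by one coordinate functional, and I would choose the auxiliary set $Q$ from the directions dual to these orderings. The guiding observation is that ``$X$ is an initial segment of $\sigma_j$'' translates into ``$X$ is separated from $G\setminus X$ by a hyperplane normal to the $j$-th coordinate, with $X$ on the near side.'' Placing every point of $Q$ on the far side of all these coordinate sweeps makes the soundness direction essentially automatic: if $X$ is an initial segment of some $\sigma_j$, then the separating hyperplane for that $j$ keeps both $X$ and $Q$ on one side, no point of $G\setminus X$ can enter $\Hull(X\cup Q)$, and hence $\Hull(X\cup Q)\cap G = X$. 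Checking the hypothesis $G\cap\Hull(Q)=\emptyset$ belongs to the same bookkeeping.

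The crux, and the step I expect to be the main obstacle, is completeness: whenever $X$ is an initial segment of \emph{no} $\sigma_j$, I must force some point of $G\setminus X$ into $\Hull(X\cup Q)$. The difficulty is structural, since the shelling condition is closed under intersection (a condition quantified over all external points), whereas membership in $\cgeom$ is a \emph{union} over orderings (a condition asserting the existence of a witnessing ordering); realizing this disjunction by a single convexity constraint is precisely the work that the auxiliary set $Q$ must do. A naive placement by coordinate ranks does not suffice — one can exhibit a set that is an initial segment of no ordering yet traps no outside point, because the required inversions occur in different coordinates and need not point at a common external element — so the embedding must be chosen carefully, exploiting the grading of $\cgeom$ by cardinality and using the conical regions $\Pos(\cdot)$ cut out by $Q$ so that the simultaneous failure of the initial-segment property across all coordinates cooperatively pulls one common external point inside the hull. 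Establishing this trapping for every non-convex $X$ at once, while preserving soundness and keeping the dimension equal to the number of orderings, is where the substance of the proof lies; once it is in place, the map is an isomorphism of $\cgeom$ onto the generalized convex shelling on $G$ with respect to $Q$, and the bound $\cdim(\cgeom)$ on the dimension follows immediately.
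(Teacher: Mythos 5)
There is a genuine gap, and it sits exactly where you say ``the substance of the proof lies'': you never construct the embedding that yields completeness. Your soundness direction (a convex $X$ satisfies $\Hull(X\cup Q)\cap G=X$ via a single coordinate hyperplane) is fine, but for completeness --- if $X\notin\cgeom$ then some point of $G\setminus X$ must land in $\Hull(X\cup Q)$ --- you only describe the obstacle and assert that a careful choice of embedding overcomes it. That careful choice is the entire content of the paper's argument: the coordinates are spaced exponentially, $F_{i}(x)=-(M+1)^{j_{i}(x)}$ where $j_{i}$ is the rank in the $i^{\text{th}}$ ordering, the external set is $Q=\{0\}\cup\{(M+1)^{M+1}e_{i}:1\leq i\leq M\}$, and Theorems~\ref{thm:exthull=pos} and~\ref{thm:hullshadow=pos} then show that the coordinate cone $\Pos(P)$, the set $\ExtHull(P)$, and $\Hull(P\cup Q)$ all cut out the same subsets of $F(E)$. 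The key computation --- that the barycenter $M^{-1}\sum_{i}p(i)$ of the coordinate-wise minima of $P$ is dominated in \emph{every} coordinate by any point of $F(E)$ that dominates each minimum in its own coordinate --- is precisely the ``trapping'' you postpone; without it, or some substitute for it, the theorem is not proved.

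There is a second, structural problem: the representation you start from is not the one in Theorem~\ref{thm:EJ}, and the discrepancy both creates your obstacle and invalidates the dimension bound you announce. In Definition~\ref{defn:cgeomfromordering}, $X\in\cgeom$ iff for every $y\notin X$ \emph{some} ordering places all of $X$ strictly above $y$; equivalently, convex sets are \emph{intersections} of initial segments of the generating orderings. You instead represent $\cgeom$ as the \emph{union} of initial segments of a family of compatible orderings (maximal chains). That statement is true, but the minimal size of such a family is the minimum number of maximal chains covering the lattice $\cgeom$, which is not $\cdim(\cgeom)$: for the paper's Example 2 ($E$ on a circle, $\cgeom=2^{E}$) one has $\cdim(\cgeom)=|E|$, while each chain contains only $|E|+1$ sets, so at least $2^{|E|}/(|E|+1)$ chains are needed to exhaust $\cgeom$. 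Thus even if completeness were fixed, your construction would embed into a space of dimension equal to the chain-cover number, exponentially worse than the claimed bound $\cdim(\cgeom)$. The intersection-style representation is also what dissolves your completeness obstacle in the paper's proof: if $X\notin\cgeom$ in the sense of Definition~\ref{defn:cgeomfromordering}, there is a \emph{single} witness $z\notin X$ that is dominated in every coordinate by some element of $X$, i.e.\ $F(z)\in\Pos(F(X))$, and it is exactly this one point that the exponential spacing traps inside $\Hull(F(X)\cup Q)$. Your union-of-chains representation supplies no such common witness, which is why, as you yourself observe, ``the required inversions occur in different coordinates and need not point at a common external element.''
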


Another straightforward way to define a convex geometry on $E$ uses a collection of orderings, which we denote $\succcurlyeq_{i}$.  Throughout the paper, orderings are total and antisymmetric.

\begin{definition}\label{defn:cgeomfromordering}
We say that $\cgeom$ is generated by a family $\{\succcurlyeq_{i}\}_{i=1}^{M}$  of orderings on $E$ if
\begin{equation*}
	\cgeom = \{\emptyset\}\cup\{ X \subseteq E: \forall y \notin X, \ \exists i \text{ so that }\forall x \in X, x \succ_i y \}
	\end{equation*}
\end{definition}


Given any finite set $E$ of Euclidean space, the standard convexity notion on it can be  generated in the manner of Definition 3 by taking all strict orderings which are generated by linear directions. Then, given any convex set $X$ and any $y\in E\setminus X$ there is a Euclidean separating hyperplane, so by projecting onto the line orthogonal to this hyperplane one finds a linear order $\succ$ for which $x\in X$ implies $x\succ y$. This shows that all standard convex sets are convex in the geometry defined by these orderings. To see the converse, notice that in the generated convexity, each convex set is Euclidean convex because it is the intersection of the half-spaces defined by some collection of $\succ_i$ with $E$. Thus, the standard convexity can be represented by the family of orderings arising from linear directions and the following lemma shows that the above definition produces a convex geometry for any set and for any set of orderings. This was proven in \cite{EJ}, but for the reader's convenience we also demonstrate it here.

\begin{lemma}[\protect{\cite{EJ} Theorem 5.2}]\label{lem:LisomL1}
If $\cgeom$ is generated by a family of orderings $\{\succcurlyeq_{i}\}_{i=1}^{m}$, then it is a convex geometry.
\end{lemma}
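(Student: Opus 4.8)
The plan is to verify the three conditions of Definition~\ref{defn:cgeom} in turn; conditions (1) and (2) are quick, and essentially all the content sits in condition (3). Throughout I write $\succ_i$ for the strict part of $\succcurlyeq_i$, and for a fixed nonempty $X\subseteq E$ and a point $y\in E\setminus X$ I record the witnessing orderings by
\[
S_X(y)=\{\,i: x\succ_i y \text{ for all } x\in X\,\}.
\]
By Definition~\ref{defn:cgeomfromordering}, a nonempty set $X$ belongs to $\cgeom$ exactly when $S_X(y)\neq\emptyset$ for every $y\in E\setminus X$.

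Condition (1) is immediate: $\emptyset\in\cgeom$ by fiat, and $E\in\cgeom$ because the membership condition is vacuous when $E\setminus X=\emptyset$. For condition (2), given $X,Y\in\cgeom$ I may assume $X\cap Y\neq\emptyset$ (otherwise it equals $\emptyset\in\cgeom$), so both $X$ and $Y$ are nonempty. For any $y\notin X\cap Y$ at least one of $y\notin X$, $y\notin Y$ holds; the ordering witnessing $X$ (respectively $Y$) against $y$ also witnesses $X\cap Y$ against $y$, since $X\cap Y$ is contained in each. Hence $X\cap Y\in\cgeom$.

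For condition (3), fix $X\in\cgeom\setminus\{E\}$, so $E\setminus X\neq\emptyset$ and $S_X(y)\neq\emptyset$ for all $y\in E\setminus X$. The goal is to produce $e\in E\setminus X$ with $X\cup\{e\}\in\cgeom$. Unwinding the definition, $X\cup\{e\}\in\cgeom$ holds iff every $y\in E\setminus(X\cup\{e\})$ admits an ordering $i\in S_X(y)$ with $e\succ_i y$; since the orderings are total and antisymmetric, the failure for a given $y$ means $y\succ_i e$ for \emph{every} $i\in S_X(y)$, in which case I call $y$ \emph{blocking for} $e$. Thus I must find some $e$ having no blocking point. I would encode this as a digraph on $E\setminus X$ with an arc $y\to e$ whenever $y$ is blocking for $e$, and then exhibit a source, i.e.\ a vertex of in-degree zero.

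The main obstacle, and the one genuinely interesting step, is showing this digraph has no directed cycle; I expect this to follow from a monotonicity property of $S_X$. The key observation is that an arc $y\to e$ forces $S_X(y)\subseteq S_X(e)$: if $i\in S_X(y)$ then $x\succ_i y$ for all $x\in X$, while the arc gives $y\succ_i e$, so transitivity yields $x\succ_i e$ for all $x\in X$, i.e.\ $i\in S_X(e)$. Consequently a directed cycle $y_0\to y_1\to\cdots\to y_{k-1}\to y_0$ would force $S_X(y_0)\subseteq S_X(y_1)\subseteq\cdots\subseteq S_X(y_0)$, so all these sets coincide; choosing any $i$ in this common nonempty set, each arc $y_j\to y_{j+1}$ then gives $y_j\succ_i y_{j+1}$, producing a $\succ_i$-cycle $y_0\succ_i y_1\succ_i\cdots\succ_i y_0$ and contradicting that $\succ_i$ is a strict total order. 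Hence the digraph is acyclic and admits a source $e$, and by construction $X\cup\{e\}\in\cgeom$, completing condition (3) and the proof. The case $X=\emptyset$ is covered identically, once one notes that $S_\emptyset(y)$ is the full (nonempty) index set for every $y$.
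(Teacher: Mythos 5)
Your proof is correct and follows essentially the same route as the paper: your blocking digraph is exactly the paper's auxiliary relation $\succcurlyeq_{X}$ on $E\setminus X$ (for distinct $y,e$, ``$y$ blocks $e$'' unwinds to ``$\forall i\ \exists w\in X\cup\{e\}$ with $y\succcurlyeq_{i}w$''), and your in-degree-zero source is precisely the paper's $\succcurlyeq_{X}$-maximal element $z$. The only difference is presentational: you get the extremal element by proving acyclicity via the monotonicity $S_{X}(y)\subseteq S_{X}(e)$ along arcs, whereas the paper verifies that $\succcurlyeq_{X}$ is a partial order (transitivity plus an antisymmetry case analysis) and then takes a maximal element.
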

\begin{proof}
The only non-trivial part of Definition~\ref{defn:cgeom} to verify is (3). Take $X\in\cgeom\setminus\{E\}$ and define a relation on $E \backslash X$ by  $a \succcurlyeq_X b$ if $\forall  i$  $\exists y_i \in X \cup \{b\}$ such that $a \succcurlyeq_i y_i$. The idea behind this relationship is that $a$ is closer to $X$ than $b$ is in the sense that any convex set which contains $X \cup \{b\}$ must also contain $a$. The relation $\succcurlyeq_X$ is easily seen to be reflexive and transitive. To prove antisymmetry assume $a\neq b$ are in $E\setminus X$.  Since $X\in\cgeom$ this implies $\exists i$ so $\forall x \in X,\ x \succ_i a$. For this $i$, either $b\succ_{i}a$ and thus  $a\not\succcurlyeq_{X}b$, or $a\succ_{i} b$ and therefore also $\forall x \in X,\ x \succ_i b$, the two of which imply that $b\not\succcurlyeq_{X}a$.  Hence $\succcurlyeq_{X}$ is antisymmetric and is a partial order. Now take $z$ a maximal element in $E \backslash X$ according to $\succcurlyeq_X$. If $y\not \in X\cup\{z\}$ then either $z \succ_X y$ or $z$ and $y$ are incomparable according to $\succcurlyeq_X$. In either case, there must be at least one $i$ so that $\forall x \in X \cup \{z\},\ x \succ_{i}y$ (because otherwise it would be the case that $y \succcurlyeq_X x$). Thus, $X \cup \{z\} \in \cgeom$.  This verifies~(3) of Definition~\ref{defn:cgeom} and completes the proof.
\end{proof}
The following converse was proved in~\cite{EJ}.

\begin{theorem}[\protect{\cite{EJ} Theorem 5.2}]\label{thm:EJ}
If $\cgeom$ is a convex geometry on $E$ then there are orderings $\succcurlyeq_{i}$ on $E$ such that $\cgeom$ is obtained  as in Definition~\ref{defn:cgeomfromordering}.
\end{theorem}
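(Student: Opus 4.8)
The plan is to build the required orderings from the \emph{shellings} of $\cgeom$, that is, from its maximal chains $\emptyset=X_{0}\subsetneq X_{1}\subsetneq\cdots\subsetneq X_{N}=E$ in which each step adds a single point (so $|X_{k}|=k$). Writing $\{e_{k}\}=X_{k}\setminus X_{k-1}$, such a chain determines an ordering $e_{1}\succ e_{2}\succ\cdots\succ e_{N}$, for which the top $k$ elements form exactly $X_{k}\in\cgeom$. I would take $\{\succcurlyeq_{i}\}$ to be the (finite) family of all orderings obtained this way and show that the geometry it generates in the sense of Definition~\ref{defn:cgeomfromordering} is precisely $\cgeom$, proving the two inclusions separately. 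The one computation to record at the outset is that for any such ordering $\succcurlyeq_{i}$ and any $y$, the set $\{z: z\succ_{i}y\}$ of points strictly above $y$ is an initial segment $X_{k}$ of the underlying chain, hence a member of $\cgeom$.

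For the inclusion ``generated $\subseteq\cgeom$'', take a nonempty $X$ satisfying the condition of Definition~\ref{defn:cgeomfromordering}, so that for each $y\notin X$ there is an index $i(y)$ with $X\subseteq\{z:z\succ_{i(y)}y\}$. Each of these sets contains $X$ and omits $y$, so intersecting over all $y\notin X$ yields $X=\bigcap_{y\notin X}\{z:z\succ_{i(y)}y\}$. By the observation above every set in this intersection is convex, and hence $X\in\cgeom$ by property~(2) of Definition~\ref{defn:cgeom}; the sets $\emptyset$ and $E$ lie in $\cgeom$ by property~(1). This gives one inclusion.

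For the reverse inclusion the crux is the following \textbf{Key Lemma}: every $X\in\cgeom$ occurs as an initial segment of some shelling of $E$. Granting it, fix $X\in\cgeom\setminus\{\emptyset\}$ and a shelling with $X=X_{|X|}$; in the associated ordering $\succcurlyeq_{i}$ every $y\notin X$ sits below all of $X$, so this single ordering witnesses the defining condition of Definition~\ref{defn:cgeomfromordering} for all $y\notin X$ at once, and $X$ is generated. Thus $\cgeom$ is exactly the geometry generated by $\{\succcurlyeq_{i}\}$.

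It remains to prove the Key Lemma, and this is where I expect the real difficulty. Extending a shelling of $X$ to one of $E$ is painless: property~(3) of Definition~\ref{defn:cgeom} adds one admissible point at a time, staying inside $\cgeom$, until $E$ is reached. The obstacle is the initial segment, namely enumerating $X$ itself so that every prefix is convex; equivalently, one must show that every nonempty convex $C\subseteq X$ possesses an \emph{extreme point} $e$ with $C\setminus\{e\}\in\cgeom$, for then stripping such points off one at a time and reversing the order produces the prefix. This cannot be read off from a naive iteration of property~(3), which only supplies \emph{some} point that may be adjoined to a convex set and gives no control over whether that point lies in $X$. To rule out the bad scenario in which $C$ can be enlarged only by points outside $X$, I would invoke the anti-exchange property --- equivalent to Definition~\ref{defn:cgeom}, as recalled in the introduction --- in the form of the Krein--Milman-type statement that every convex set is the closure of its extreme points; in particular a nonempty convex set has an extreme point, which is exactly the ingredient the construction needs.
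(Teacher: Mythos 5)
Note first that the paper does not actually prove Theorem~\ref{thm:EJ}; it is quoted from \cite{EJ}, so there is no in-paper proof to compare against. Your architecture is nonetheless the classical one: generate orderings from the shellings (maximal chains) of $\cgeom$, get the inclusion ``generated $\subseteq\cgeom$'' by writing a generated set as an intersection of initial segments (each convex, and $\cgeom$ is intersection-closed), and reduce the reverse inclusion to the Key Lemma that every $X\in\cgeom$ is an initial segment of some shelling. All of those steps are correct as you state them, as is your reduction of the Key Lemma to the claim that every nonempty convex $C$ contains a point $e$ with $C\setminus\{e\}\in\cgeom$.

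The genuine gap is the last step. You justify that claim by appeal to ``the anti-exchange property \dots in the form of the Krein--Milman-type statement,'' citing the introduction; but the introduction recalls only the anti-exchange formulation, and the Krein--Milman property (every convex set is the hull of its extreme points) is a separate theorem of \cite{EJ} whose derivation from Definition~\ref{defn:cgeom} is itself nontrivial --- essentially of the same depth as what you are trying to prove. As written, the heart of the argument sits in an unproven black box. Fortunately the gap closes with an elementary argument using only properties (2) and (3) of Definition~\ref{defn:cgeom}, and no extreme points at all: iterate property (3) starting from $\emptyset$ to obtain a single maximal chain $\emptyset=C_{0}\subsetneq C_{1}\subsetneq\dotsb\subsetneq C_{N}=E$ with every $C_{k}\in\cgeom$ and $|C_{k}|=k$. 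Given $X\in\cgeom$, set $D_{k}=C_{k}\cap X$. Each $D_{k}$ is convex by property (2), the $D_{k}$ increase from $D_{0}=\emptyset$ to $D_{N}=X$, and $D_{k+1}\setminus D_{k}\subseteq C_{k+1}\setminus C_{k}$ has at most one element; so the distinct sets among the $D_{k}$ form a chain of convex sets from $\emptyset$ to $X$ adding one point at a time. Concatenating this with the upward extension from $X$ to $E$ (your ``painless'' direction) yields a shelling having $X$ as an initial segment, which is exactly the Key Lemma. In other words, the difficulty you anticipated --- that property (3) alone gives no control over which point gets added --- is real, but it is resolved by bringing in property (2) rather than Krein--Milman.
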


\section{New Proof and Dimensional Bound of a Generalized Convex Shelling}\label{sec:newproof}

The purpose of this section is to give a proof of Theorem~\ref{thm:KNO} from Theorem~\ref{thm:EJ}.  We first define a map which realizes the orderings as the orders on coordinate directions in $\mathbb{R}^{M}$.  To do so, for each $i$ arrange $E$ using the $i^{\text{th}}$ order as $x_{i1}\succ_{i}x_{i2}\succ_{i}\dotsm\succ_{i}x_{iN}$ and for $x\in E$ let $j_{i}(x)$ be the unique choice such that $x=x_{ij_{i}(x)}$. That is, $j_i(x)$ denotes $x$'s place according to the $i^{\text{th}}$ ordering. Then define $F_{i}:E\to\mathbb{R}$ by $F_{i}(x)=-(M+1)^{j_{i}(x)}$ and let $F(x)=(F_{1}(x),\dotsc,F_{M}(x)):E\to\mathbb{R}^{M}$.  We have replicated the orderings $\succcurlyeq_{i}$ from $E$ using the coordinate directions on $F(E)$, so the following is obvious.
\begin{lemma}
On $\mathbb{R}^{M}$ define $(x_{1},\dotsc,x_{M})>_{i}(y_{1},\dotsc,y_{M})$ to mean that $x_{i}>y_{i}$.  Then $\{\geq_{i}\}$ are orderings on $F(E)$ and the convex geometry $\cgeom_{1}$ they generate is isomorphic to $\cgeom$ on $E$.
\end{lemma}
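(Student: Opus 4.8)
The plan is to show that $F$ is an order-isomorphism onto its image: that $F$ is injective and that each coordinate order $>_{i}$ on $F(E)$ reproduces exactly the original order $\succcurlyeq_{i}$ on $E$, and then to transport the defining condition of Definition~\ref{defn:cgeomfromordering} across the resulting bijection.

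First I would record the single algebraic fact underlying everything. For fixed $i$ the position function $j_{i}:E\to\{1,\dots,N\}$ is a bijection, since $\succcurlyeq_{i}$ is a total antisymmetric order on a finite set, and because $t\mapsto -(M+1)^{t}$ is strictly increasing, $F_{i}=-(M+1)^{j_{i}}$ is strictly monotone in $j_{i}$. As a smaller value of $j_{i}$ corresponds to a higher position in $\succcurlyeq_{i}$, this yields the key equivalence $x\succ_{i}y \iff F_{i}(x)>F_{i}(y)$, that is, $x\succ_{i}y \iff F(x)>_{i}F(y)$. In particular each $F_{i}$ is injective, hence $F$ is injective and $F:E\to F(E)$ is a bijection, and any two distinct points of $F(E)$ differ in every coordinate.

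Next I would verify that the relations $\geq_{i}$ are genuine orderings on $F(E)$. Reflexivity and transitivity are immediate because $\geq_{i}$ compares a single real coordinate; totality holds because any two reals are comparable; and antisymmetry on $F(E)$ follows from the previous paragraph, since distinct points of $F(E)$ have distinct $i^{\text{th}}$ coordinates. Thus $\{\geq_{i}\}$ is a family of total antisymmetric orders on $F(E)$ corresponding under $F$ exactly to $\{\succcurlyeq_{i}\}$ on $E$, and by Lemma~\ref{lem:LisomL1} the geometry $\cgeom_{1}$ they generate is indeed a convex geometry.

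Finally I would establish the isomorphism by direct translation through Definition~\ref{defn:cgeomfromordering}. For $X\subseteq E$ put $Z=F(X)$; since $F$ is a bijection, $y\notin X \iff F(y)\notin Z$ and $x\in X\iff F(x)\in Z$, while the key equivalence gives $x\succ_{i}y \iff F(x)>_{i}F(y)$. Hence the condition ``$\forall y\notin X\ \exists i\ \forall x\in X,\ x\succ_{i}y$'' holds if and only if ``$\forall w\notin Z\ \exists i\ \forall z\in Z,\ z>_{i}w$'' holds, and $\emptyset$ corresponds to $\emptyset$; therefore $X\in\cgeom \iff F(X)\in\cgeom_{1}$, so $F$ witnesses the claimed isomorphism. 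I do not expect a real obstacle here, as this is precisely the ``obvious'' content the authors flag; the only genuine care needed is in aligning the direction of the inequality (the minus sign together with the convention that $j_{i}=1$ labels the $\succcurlyeq_{i}$-largest element) so that $>_{i}$ reproduces $\succcurlyeq_{i}$ and not its reverse.
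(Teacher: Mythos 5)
Your proposal is correct and takes essentially the same route as the paper: the paper states this lemma without proof, remarking only that $F$ ``replicates'' the orderings $\succcurlyeq_{i}$ as the coordinate orders on $F(E)$, which is precisely the observation you verify in detail (injectivity of each $F_{i}$, the equivalence $x\succ_{i}y \iff F(x)>_{i}F(y)$, and transport of the condition in Definition~\ref{defn:cgeomfromordering}). One trivial wording slip: $t\mapsto -(M+1)^{t}$ is strictly \emph{decreasing}, not increasing, but your stated key equivalence and everything downstream of it are correct, since the decrease in $j_{i}$ exactly cancels the reversal from the minus sign.
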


There is a hull operation for $\cgeom_{1}$ which we call $\Pos$ (an abbreviation of positive sector).  If $P\subset F(E)$  then $\Pos(P)=\{x: \forall i\, \exists p(i)\in P \text{ with } x\geq_{i}p(i) \}$.  Clearly $\Pos(P)\cap F(E)\in\cgeom_{1}$ and $P\in\cgeom_{1} \iff P=\Pos(P)\cap F(E)$.  We will also need another hull operation, which we define by $\ExtHull(P)=\{x:\exists p\in\Hull(P)\text{ with } x\geq_{i}p,\ \forall i\}$, from which we can define a collection $\cgeom_{2}$  of subsets of $F(E)$ by $P\in\cgeom_{2}\iff P=\ExtHull(P)\cap F(E)$.  It is not immediately clear whether $\cgeom_{2}$ is a convex geometry, but in fact we have the following.

\begin{theorem}\label{thm:exthull=pos}
$\Pos(P)\cap F(E)=\ExtHull(P)\cap F(E)$ for any $P\subset F(E)$.  Equivalently, $\cgeom_{1}=\cgeom_{2}$.
\end{theorem}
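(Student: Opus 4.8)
The plan is to prove the two inclusions of $\Pos(P)\cap F(E)=\ExtHull(P)\cap F(E)$ separately, noting that one direction holds with no reference to the special form of $F$, while the other is where the scaling $F_{i}(x)=-(M+1)^{j_{i}(x)}$ does all the work. First I would record that $\ExtHull(P)\subseteq\Pos(P)$ for every $P$: if $x\in\ExtHull(P)$ then $x\geq_{i}p$ for some $p\in\Hull(P)$, and writing $p=\sum_{l}\lambda_{l}p_{l}$ as a convex combination of points of $P$ shows that its $i$-th coordinate is at least $\min_{l}(p_{l})_{i}$, so in each coordinate direction $x$ dominates some element of $P$ and hence $x\in\Pos(P)$. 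Intersecting with $F(E)$ gives $\ExtHull(P)\cap F(E)\subseteq\Pos(P)\cap F(E)$ immediately.

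For the reverse inclusion I would fix $x\in\Pos(P)\cap F(E)$ and aim to show $x\in\ExtHull(P)=\Hull(P)+\mathbb{R}^{M}_{\geq0}$. If $x\in P$ this is trivial, so assume $x\notin P$. Since $\ExtHull(P)$ is a closed convex (indeed polyhedral) set I would argue by separation: were $x\notin\ExtHull(P)$, there would be a functional $w$ with $\langle w,x\rangle<\langle w,y\rangle$ for all $y\in\ExtHull(P)$, and because $\ExtHull(P)$ contains the translate $p+\mathbb{R}^{M}_{\geq0}$ of the nonnegative orthant the infimum is finite only if $w\geq0$, in which case the separation reduces to $\langle w,p\rangle>\langle w,x\rangle$ for every $p\in P$. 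Thus it suffices to prove that for each $w\geq0$ there is some $p\in P$ with $\langle w,p\rangle\leq\langle w,x\rangle$.

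This is the crux, and it is exactly here that the geometric growth of the coordinates is used. Writing $x=F(a)$, so that $x_{i}=-(M+1)^{j_{i}(a)}$, I would let $i_{0}$ be an index maximizing $w_{i}(M+1)^{j_{i}(a)}$. The hypothesis $x\in\Pos(P)$ supplies some $p=F(b)\in P$ with $p_{i_{0}}\leq x_{i_{0}}$, that is $j_{i_{0}}(b)\geq j_{i_{0}}(a)$; equality would force $a=b$ by injectivity of the $i_{0}$-th ranking and hence $x=p\in P$, which is excluded, so in fact $j_{i_{0}}(b)\geq j_{i_{0}}(a)+1$. The desired inequality $\langle w,p\rangle\leq\langle w,x\rangle$ is equivalent to $\sum_{i}w_{i}(M+1)^{j_{i}(b)}\geq\sum_{i}w_{i}(M+1)^{j_{i}(a)}$, and I would establish it by bounding the right side above by $M\cdot w_{i_{0}}(M+1)^{j_{i_{0}}(a)}$ (there are $M$ summands, each at most the maximal one) and the left side below by its single $i_{0}$-term $w_{i_{0}}(M+1)^{j_{i_{0}}(b)}\geq(M+1)\,w_{i_{0}}(M+1)^{j_{i_{0}}(a)}$. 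Since $M+1>M$ the latter dominates the former, giving the claim.

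The main obstacle is this last estimate together with its tie-breaking: the whole argument hinges on the base $M+1$ strictly exceeding the number $M$ of coordinates, so that advancing one rank in a single coordinate outweighs the combined contribution of all $M$ coordinates, and on the observation that a tie in ranks can only occur when $x$ itself lies in $P$. Once the two inclusions are in hand, the equality $\Pos(P)\cap F(E)=\ExtHull(P)\cap F(E)$ holds for every $P\subseteq F(E)$, so the fixed-point conditions $P=\Pos(P)\cap F(E)$ and $P=\ExtHull(P)\cap F(E)$ defining $\cgeom_{1}$ and $\cgeom_{2}$ coincide, whence $\cgeom_{1}=\cgeom_{2}$.
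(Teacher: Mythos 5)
Your proof is correct, but it proceeds by a genuinely different route than the paper's. The paper handles the nontrivial inclusion $\Pos(P)\cap F(E)\subseteq\ExtHull(P)$ constructively: it forms the explicit barycenter $p=M^{-1}\sum_{i}p(i)$ of the coordinate-wise minimizers $p(i)\in P$ and verifies by direct arithmetic (using $p(i)_{j}<-M$ and the factor-$(M+1)$ gap forced by $x\succ_{j}p(j)$) that $p_{j}\leq x_{j}$ in every coordinate, thereby exhibiting a concrete point of $\Hull(P)$ that $x$ dominates. You instead argue by duality: since $\ExtHull(P)=\Hull(P)+\mathbb{R}^{M}_{\geq0}$ is a closed polyhedron, $x\notin\ExtHull(P)$ would yield a separating functional, which must be nonnegative because the set contains translates of the positive orthant, and you refute separation by producing, for every $w\geq0$, some $p\in P$ with $\langle w,p\rangle\leq\langle w,x\rangle$ --- namely the $\Pos$-witness in the coordinate $i_{0}$ maximizing $w_{i}(M+1)^{j_{i}(a)}$, where your tie-breaking step (equal ranks force $x=p\in P$, excluded) and the inequality $M+1>M$ close the estimate. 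Both arguments turn on the same combinatorial crux, that advancing one rank in a single coordinate outweighs the combined contribution of all $M$ coordinates, but yours packages it as a statement about nonnegative linear functionals while the paper packages it as a statement about one explicit convex combination. What each buys: your version is conceptually cleaner and requires no inspired choice of point, at the price of invoking the separating hyperplane theorem and the closedness/polyhedrality of $\ExtHull(P)$ rather than bare arithmetic; the paper's constructive witness pays off downstream, since the proof of Theorem~\ref{thm:hullshadow=pos} reuses exactly that point $y=M^{-1}\sum_{i}p(i)$ together with its quantitative margin $y_{j}\leq x_{j}-(M^{2}-1)/M$ to obtain the explicit shelling parameter $\lambda=(M+1)^{M+1}$. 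If one adopted your proof of this theorem, that subsequent argument would need to re-derive an explicit dominated point of $\Hull(P)$ with a definite margin --- which is precisely the computation your separation argument was designed to avoid.
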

\begin{proof}
Let $p(i)\in P$ be such that $p(i)_{i}=\min_{p\in P} p_{i}$, where the subscript denotes the $i^{\text{th}}$ coordinate. That is, $p(i)$ is the minimally-ranked member of $P$ in coordinate $i$. There is exactly one such $p(i)$ for each $i$, because $>_{i}$ is an ordering on $F(E)$,  however it is possible that $p(i)=p(j)$ for some $i\neq j$.  It is clear that $\Pos(P)=\{x:\forall i, x_{i}\geq p(i)_{i}\}$.

If $x\in\ExtHull(P)$ there are elements $p^{k}\in P$ so $x_{i}\geq \sum_{k} \alpha_{k}(p^{k})_{i}$, where $\alpha_{k}\geq0$, $\sum \alpha_{k}=1$.  But then $(p^{k})_{i}\geq p(i)_{i}$, so  $x_{i}\geq p(i)_{i}$ for each $i$, and thus $x\in\Pos(P)$ and the inclusion $\supset$ is proved.

For the converse, let $p=M^{-1}\sum_{i}p(i)\in\Hull(P)$.  Recall from the definition of $F(E)$ that $p(i)_{j}< -M$ for all $i,j$, so that $Mp_{j} = \sum_{i \neq j} p(i)_j+  p(j)_j< -M(M-1)+p(j)_{j}$.   Now let \mbox{$x\in\Pos(P) \cap F(E)$}.  If $x\in P$ then $x \in \ExtHull(P)$, so we are done.  If not, then for all $j$, the orderings $\succcurlyeq_{j}$ are antisymmetric, and so we have that $x\succ_{j}p(j)$ and therefore $Mx_{j}\ > p(j)_{j}$. \linebreak Combining this with our first estimate yields $Mp_{j} < -M(M-1)+Mx_{j}$, so that $p_{j}\leq x_{j}$.  As this is true for all $j$, it is the case that $x\geq_{j}p$ for all $j$ and thus $x\in\ExtHull(P)$.
\end{proof}

Finally we relate $\ExtHull(P)$ to the geometry given by a generalized convex shelling. Specifically, let $e_{i}$ be the $i^{\text{th}}$ coordinate vector in $\mathbb{R}^{M}$, let $Q$ be the set of points $\{0\}\cup\{\lambda e_{i}: 1\leq i\leq M\}$ where $\lambda$ is a positive real number. Furthermore, let $\cgeom_{3}$ be the convex geometry on $F(E)$ given by the convex shelling of $F(E)$ with respect to $Q$ as in Definition~\ref{defn:cvxshelling}. This is legitimate because $\Hull(Q)$ is in the positive sector, so cannot intersect $F(E)$.

\begin{theorem}\label{thm:hullshadow=pos}
There exists $\lambda$ large enough such that $\Hull(P\cup Q)\cap F(E)=\ExtHull(P)\cap F(E)$ for any $P\subset F(E)$. Equivalently $\cgeom_{2}=\cgeom_{3}$.
\end{theorem}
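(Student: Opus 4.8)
The plan is to prove the set identity $\Hull(P\cup Q)\cap F(E)=\ExtHull(P)\cap F(E)$ by establishing the two inclusions separately, with only the reverse inclusion requiring $\lambda$ to be large. Throughout I would use that every point of $F(E)$ lies strictly in the negative orthant (indeed each coordinate equals $-(M+1)^{j_i(\cdot)}\le -(M+1)<0$), while every point of $Q$ has nonnegative coordinates, so $\Hull(Q)$ sits in the closed nonnegative orthant and is disjoint from $F(E)$.

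First I would dispatch the inclusion $\Hull(P\cup Q)\cap F(E)\subseteq\ExtHull(P)$, which holds for \emph{every} $\lambda>0$. Given $x$ in the left-hand side, write it as a convex combination $x=\sum_k\alpha_k p^k+\sum_j\beta_j q^j$ with $p^k\in P$ and $q^j\in Q$. Because $x\in F(E)$ cannot lie in $\Hull(Q)$, the total $P$-weight $A=\sum_k\alpha_k$ is strictly positive, so $p=A^{-1}\sum_k\alpha_k p^k\in\Hull(P)$ is well defined. Since the $q^j$ are coordinatewise nonnegative, $x_i\ge A p_i$ in every coordinate, and as $p_i<0$ and $A\le 1$ this gives $Ap_i\ge p_i$, hence $x_i\ge p_i$ for all $i$ and $x\in\ExtHull(P)$.

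The substantive inclusion is $\ExtHull(P)\cap F(E)\subseteq\Hull(P\cup Q)$, and this is where $\lambda$ enters. The key observation is that the proof of Theorem~\ref{thm:exthull=pos} does more than assert membership: for $x\in\ExtHull(P)\cap F(E)$ with $x\notin P$ it exhibits the explicit point $p=M^{-1}\sum_i p(i)\in\Hull(P)$ whose coordinates satisfy $p_j<x_j$ strictly in every $j$ (the chain $p_j<-(M-1)+x_j$ obtained there is strict since $M\ge 1$, using that the coordinates are powers of $M+1$). If instead $x\in P$ then $x\in\Hull(P)\subseteq\Hull(P\cup Q)$ trivially. Given such a strict witness, set $\alpha=\max_j(x_j/p_j)$; since $0>x_j>p_j$ we have $x_j/p_j\in(0,1)$ and so $\alpha\in(0,1)$, and multiplying $\alpha\ge x_j/p_j$ by $p_j<0$ gives $\alpha p_j\le x_j$, whence $d_j:=x_j-\alpha p_j\ge 0$. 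Then I would write
\[ x=\alpha p+\sum_j\frac{d_j}{\lambda}\,(\lambda e_j)+\Bigl(1-\alpha-\sum_j\frac{d_j}{\lambda}\Bigr)\cdot 0, \]
which exhibits $x$ as a convex combination of $p\in\Hull(P)$, the vertices $\lambda e_j\in Q$, and $0\in Q$, valid as soon as the coefficient on $0$ is nonnegative, i.e. as soon as $\lambda\ge\bigl(\sum_j d_j\bigr)/(1-\alpha)$.

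Finally, to obtain a single $\lambda$ that works for every $P$ simultaneously, I would invoke the finiteness of $F(E)$: there are only finitely many pairs $(x,P)$ with $x\in\ExtHull(P)\cap F(E)\setminus P$, so taking $\lambda$ to be the maximum of the finitely many thresholds $\bigl(\sum_j d_j\bigr)/(1-\alpha)$ suffices. I expect the main obstacle to be precisely this reverse inclusion, and within it the need for a witness $p\in\Hull(P)$ dominated \emph{strictly} by $x$ in every coordinate: coordinatewise domination alone, which is all that $\ExtHull$ guarantees directly, would permit $x_j=p_j$ and force $\alpha$ up to $1$, leaving no room in the simplex $\Hull(Q)$ to absorb the displacement $x-\alpha p$. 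It is exactly the exponential spacing $F_i(x)=-(M+1)^{j_i(x)}$ built into $F$ and exploited in Theorem~\ref{thm:exthull=pos} that upgrades domination to \emph{strict} domination and makes the argument close.
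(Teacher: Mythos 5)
Your proof is correct, and it shares the paper's skeleton: the easy inclusion follows from $F(E)$ lying in the (strictly) negative orthant while $Q$ lies in the nonnegative one, and for the hard inclusion both you and the paper use the very same witness $p=M^{-1}\sum_i p(i)\in\Hull(P)$ from the proof of Theorem~\ref{thm:exthull=pos}, whose strict coordinatewise domination $p_j<x_j$ (a consequence of the exponential spacing in $F$) is exactly what lets the residual $x-\alpha p$ land in the nonnegative orthant. Where you genuinely diverge is in how the convex combination is assembled and how uniformity of $\lambda$ is obtained. The paper uses a \emph{fixed} shrinkage coefficient $1-(M+1)^{-M}$, independent of $x$ and $P$; making the residual $u=x-(1-(M+1)^{-M})y$ nonnegative then requires the quantitative gap $y_j< x_j-1$ together with the a priori coordinate bound $y_j\geq -(M+1)^{M}$, but the payoff is an explicit uniform bound on $u$ and hence a concrete admissible value of $\lambda$. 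You instead choose the coefficient adaptively, $\alpha=\max_j(x_j/p_j)<1$, which needs only strict domination---no gap estimate, no coordinate lower bound, and no separate treatment of the case $M=1$ (which the paper handles on its own)---but your threshold $\bigl(\sum_j d_j\bigr)/(1-\alpha)$ depends on the pair $(x,P)$, so you must invoke finiteness of $F(E)$ to extract a single $\lambda$ valid for all $P$. The paper itself notes this finiteness shortcut before giving its construction, so your argument is a legitimate and arguably cleaner variant; what it gives up is only the explicit embedding constant, which the paper's fixed-coefficient computation is designed to produce.
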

\begin{proof}
Since $P\subset F(E)$ is in the negative sector, then all points of $Q$ are in $\ExtHull(P)$, and thus $\Hull(P\cup Q)\subset\ExtHull(P)$. It remains to prove the converse for the intersection with $F(E)$.  In  the case $M=1$ this is clear with $Q=\{0\}$, i.e. $\lambda=0$, so we may assume $M\geq2$.

If $x\in F(E)\cap\ExtHull(P)\setminus P$ then there is $y\in\Hull(P)$ so that $y_j \leq M^{-1}\left(-(M-1)(M+1)+Mx_j)\right)$ $= -\frac{M^2-1}{M} + x_j < -1 + x_j$ for all $j$. Notice that we can take $y = M^{-1} \sum_i p(i)$ as in the previous theorem. Moreover, as $y_j \geq -(M+1)^M$, it follows that $(M+1)^{-M} y_j \geq -1$ and
\begin{equation*}
	(1-(M+1)^{-M})y_j = y_j - (M+1)^{-M} y_j \leq y_j + 1 < x_j.
\end{equation*}

Define $u_{j}=x_{j}-(1-(M+1)^{-M})y_{j}> 0$ and make these the components of $u$, so that \linebreak $x=(1-(M+1)^{-M})y+u$ with $y\in\Hull(P)$.  Thus $x$ is a convex combination of $y$ and $(M+1)^{M}u$.  We know $(M+1)^{M}u$ is in the positive sector so it is clear that for $\lambda$ large enough, $(M+1)^{M}u\in\Hull(Q)$, thus completing the proof. Since there are a finite number of convex geometries on $X$, there exists a uniformly large enough $\lambda$. But, a construction is of interest and in particular $u_{j}\leq -M -(1-(M+1)^{-M})(-(M+1)^{M})< (M+1)^{M}$, so $\lambda=(M+1)^{M+1}$ suffices.
\end{proof}

Evidently Lemma~\ref{lem:LisomL1}, Theorem~\ref{thm:exthull=pos} and Theorem~\ref{thm:hullshadow=pos} together show that for a suitable choice of $\lambda$ we have our original geometry $\cgeom$ is isomorphic to $\cgeom_{1}=\cgeom_{2}=\cgeom_{3}$, with this last being a generalized convex shelling.  Therefore, from Theorem~\ref{thm:EJ} we then have Theorem~\ref{thm:KNO}. Furthermore, one may notice from the proof that the coordinates of the embedding may be taken to be integral, so that a generalized convex shelling into $\mathbb{Z}^N$ is obtained.

\begin{definition} For a convex geometry $\cgeom$, its geometric dimension $\dim(\cgeom)$ is defined as the lowest dimension for which the convex geometry is isomorphic to a generalized convex shelling.
\end{definition}

\begin{definition}
(\citet*{ES}, Section 2) For a convex geometry $\cgeom$, its convex dimension  $\cdim(\cgeom)$ is defined as the least number of orderings with which it can be represented as in Definition \ref{defn:cgeomfromordering}.
\end{definition}

The proofs of Lemma~\ref{lem:LisomL1} and Theorems~\ref{thm:exthull=pos} and~\ref{thm:hullshadow=pos}  together show that any convex geometry $\cgeom$ represented by $k$ orderings may be embedded into $\mathbb{R}^{k}$ as a generalized convex shelling. On the other hand, the proof of Theorem~2.8 in~\cite{KNO} presents an embedding into $\mathbb{R}^{|E|}$. Therefore, we have the following corollary.

\begin{corollary}\label{cor:dimbound} For any convex geometry $\cgeom$ over $E$, $\dim(\cgeom) \leq \min(|E|,\cdim(\cgeom))$.
\end{corollary}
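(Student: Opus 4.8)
The plan is to split the claim into the two upper bounds $\dim(\cgeom) \leq \cdim(\cgeom)$ and $\dim(\cgeom) \leq |E|$ and then combine them by taking the minimum. Since $\dim(\cgeom)$ is defined as the least dimension admitting a generalized convex shelling representation, each inequality reduces to exhibiting one concrete embedding of the appropriate dimension; no matching lower bound is required.

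For the first bound, I would begin from the definition of convex dimension, which guarantees a family of exactly $M := \cdim(\cgeom)$ orderings $\{\succcurlyeq_i\}_{i=1}^{M}$ on $E$ generating $\cgeom$ in the sense of Definition~\ref{defn:cgeomfromordering}. Feeding this family into the construction of Section~\ref{sec:newproof}, the map $F\colon E\to\mathbb{R}^{M}$ together with Lemma~\ref{lem:LisomL1}, Theorem~\ref{thm:exthull=pos} and Theorem~\ref{thm:hullshadow=pos} produces, for $\lambda$ chosen sufficiently large, a generalized convex shelling of $F(E)$ with respect to $Q$ that is isomorphic to $\cgeom$. As this shelling lives in $\mathbb{R}^{M}$, we obtain $\dim(\cgeom) \leq M = \cdim(\cgeom)$.

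For the second bound, I would invoke Theorem~\ref{thm:KNO} together with the explicit construction in its source: the proof of Theorem~2.8 in~\cite{KNO} realizes any convex geometry as a generalized convex shelling inside $\mathbb{R}^{|E|}$, which directly yields $\dim(\cgeom) \leq |E|$. Combining the two inequalities gives $\dim(\cgeom) \leq \min(|E|,\cdim(\cgeom))$, as claimed.

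There is no substantial obstacle here, since all the heavy lifting has already been done in Section~\ref{sec:newproof} and in the cited embedding of~\cite{KNO}. The only points demanding care are that $M$ is attained exactly — guaranteed by the definition of $\cdim$ as the \emph{least} number of representing orderings — and that the uniform choice of $\lambda$ from Theorem~\ref{thm:hullshadow=pos} remains valid once $M$ is fixed. Everything else follows immediately.
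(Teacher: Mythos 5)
Your proposal is correct and follows essentially the same route as the paper: the paper likewise obtains $\dim(\cgeom) \leq \cdim(\cgeom)$ by running the Section~\ref{sec:newproof} construction (Lemma~\ref{lem:LisomL1}, Theorems~\ref{thm:exthull=pos} and~\ref{thm:hullshadow=pos}) on a minimal family of $\cdim(\cgeom)$ orderings, and $\dim(\cgeom) \leq |E|$ by citing the embedding into $\mathbb{R}^{|E|}$ from the proof of Theorem~2.8 in~\cite{KNO}. Your added remarks about attaining $M$ exactly and the uniform choice of $\lambda$ are sound but already covered by the definition of $\cdim$ and by Theorem~\ref{thm:hullshadow=pos} respectively.
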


The upper bounds in the corollary are trivially optimal when $\cdim(\cgeom)=1$. The following examples show that for $\cdim(\cgeom)>1$ it is possible for the bound $\dim(\cgeom) \leq \cdim(\cgeom)$ to be optimal  or very far from optimal.  In the first example the bound $\dim(\cgeom) \leq \cdim(\cgeom)$ from our construction is much better than that from the argument in~\cite{KNO}.

\begin{example}[Optimal dimension bound from number of orderings]
Let $E=\{a,b,c\}$ and $\cgeom=\{\emptyset,\{a\},\{a,b\},\{a,c\},E\}$. Observe that $\cgeom$ can be represented by the two orderings $c \prec_{1} b \prec_{1} a$ and $b \prec_{2} c \prec_{2} a$, thus $\dim(\cgeom)\leq2$.  If it were possible to embed $E$  in $\mathbb{R}$ so as to obtain $\cgeom$ from the generalized convex shelling with respect to a set $Q$ we would have both $\Hull(Q)\cap E=\emptyset$ and $a\in\Hull(\{b\}\cup Q)\cap\Hull(\{c\}\cup Q)$.  It follows easily that if $I_{1}$ and $I_{2}$ are the components of $\mathbb{R}\setminus\{a\}$ then $\emptyset\neq Q\subset I_{1}$ and $\{b,c\}\subset I_{2}$, so without loss of generality we may suppose that $q<a<b<c$ for all $q\in Q$.  Then $\Hull(\{c\}\cup Q)\cap E=E$ in contradiction to the fact that $\{a,c\}$ is convex.  We conclude that $\cgeom$ cannot be given by a generalized convex shelling in $\mathbb{R}$ and therefore $\dim(\cgeom)=2$.  Comparing this to Corollary~\ref{cor:dimbound} we see that in this case $2=\dim(\cgeom)=\cdim(\cgeom)<|E|=3$.

The above construction can be generalized to obtain a family of examples with $2=\dim(\cgeom)=k$ and arbitrarily large finite $|E|\geq3$: Take  $E \supseteq \{a,b, c\}$ and let $\prec_1$, $\prec_2$ to be any extension of the orderings given above over $\{a,b,c\}$. Then the convex geometry induced by $\prec_1$, $\prec_2$ as in Definition~\ref{defn:cgeomfromordering} has dimension at most $2$ by Corollary~\ref{cor:dimbound} and at least $2$ by the previous argument, yet $|E|$ may be arbitrarily large.
\end{example}

\begin{example}[Non-optimal dimension bound from number of orderings]
Let $E$ be a finite subset of the unit circle in $\mathbb{R}^2$ with the standard convexity, so $\cgeom$ is the power set $2^{E}$. If $|E| \geq 3$, then $\dim(\cgeom)=2$ (and otherwise $\dim(\cgeom)=1$).  Suppose $\cgeom$ is represented by orderings $\prec_1,\ldots,\prec_k$.  If $x\in E$ then $E\setminus\{x\}$ is convex, so from Definition~\ref{defn:cgeomfromordering} there is $j_{x}$ so that $x$ is minimal with respect to $\prec_{j_{x}}$. Additionally, if a set of orderings minimally ranks each element, then this set represents $\cgeom$. Thus, $\cdim(\cgeom)=|E|$, which may be arbitrarily large in comparison to $\dim(\cgeom)$. 
\end{example}

\begin{example}[Lower-bounded finite lattices]
As mentioned in the introduction, two papers \cite{A} and \cite{WS} found that lower-bounded lattices can be generalized convex shelled into $\mathbb{Q}^n$. To understand whether an atomic finite convex geometry $\cgeom$ on $E$ is lower-bounded, the relation $D$ is defined on the elements of $E$ by $a D b$ if there is a set $a \in A \subseteq E$ so that the convex hull of $A$ contains $b$ and the convex hull of $A\backslash a$ does not. The lower-bound condition states that $D$ is acyclic. Notice that this restriction prevents the convexity configuration of four points on a line $a-b-c-d$, because there $b D c$ (because $c$ is in the convex hull of $b,d$ and not $d$) and likewise $c D b$. One convex geometry which is lower-bounded can be specified by a three level poset, $E=A \cup B \cup C$ where $A,B,C$ are distinct non-empty sets and $a \prec b \prec c \ \ \forall a \in A, b \in B, c\in C$. For this poset, $G\in \cgeom$ if $G=\{x: \exists a,b \in G \text{ s.t. } a \preceq x \preceq b\}$.\footnote{A poset of height four or more with this convexity would not generate a lower bounded lattice.} According to the construction of \cite{WS}, the dimension of the embedding of $G$ is the number of $D$-chains, which is $|B|(|A|+|C|) + |E|$. So, for example, if $|A|=|B|=|C|=n/3$, then the dimension of the embedding is $2n^2/9+n$. This is larger than the $n$ embedding of \cite{KNO}. In fact, one may also notice that $\cdim(\cgeom)=\max(|B|,|C|)+\max(|A|,|B|)=2n/3$ (every ordering is of the type $A > B > C$ where each member of $B,C$ must be relatively minimal or of the type $C > B > A$, again with minimality restrictions). In general, for poset convex geometries, if the poset's width is less than half of all elements, then $\cdim(\cgeom) < |E|$.
\end{example}

In \cite{ES}, the authors study the lattice of convex sets in a convex geometry using various lattice-theoretic notions of dimension, including the convex dimension $\cdim$ described above. In this paper we have related the lattice-theoretic notion of $\cdim$ with the geometric dimension notion $\dim$ due to \cite{KNO}. From the examples it is apparent that these dimensional notions are generally distinct although there are cases where the two values coincide. One question of interest would be a characterization of such geometries.  It might also be interesting to know whether there are cases where $\dim(\cgeom)=|E|\neq1$.

\section{Embeddings of Convex Geometries as Convex Polygons}

The idea of giving a concrete realization of a convex geometry as an easily visualized set of shapes in $\mathbb{R}^2$ is due to \cite{C}, who views this as a important special case of an embedding of a convex geometry.  A particular case is an embedding as convex polygons in $\mathbb{R}^{n}$.

\begin{definition}
Let $\Poly_{n}$ denote the set of convex polygons with vertices in $\mathbb{R}^{n}$.  An embedding of a convex geometry $\cgeom$ as convex polygons in $\mathbb{R}^{n}$ is a map $\F:\cgeom\to\Poly_{n}$ such that 
\begin{equation}\label{defn:polycgeom}
\cgeom = \emptyset \cup \{X \subseteq E: \forall y \in E, \F(y) \subseteq \Hull(\F(X)) \Rightarrow y \in X\}.
\end{equation}
For notational simplicity, in what follows we will write $\Poly$ for $\Poly_{2}$.
\end{definition}
\begin{remark}
 Note that similar definitions can be given for other families of shapes.  For example, $\F$ is an embedding of $\cgeom$ as circles in $\mathbb{R}^{2}$ if the above definition holds with $\Poly_{n}$ replaced by the set $\mathcal{C}$ of circles in $\mathbb{R}^{2}$.
\end{remark}

The results of the previous section prove that a convex geometry with $\cdim=n$ can be embedded as convex polygons in $\mathbb{R}^{n}$ by $x \rightarrow \Hull(F(x) \cup Q)$.  In this section we improve on that result by showing that any finite convex geometry can be embedded as convex polygons in $\mathbb{R}^2$.  This result is in the spirit of~\cite{C}, who proves that convex geometries of dimension 2 may be embedded as a finite, separated, concave set of collinear circles in $\mathbb{R}^2$ and that such an embedding characterizes these geometries.\footnote{Higher dimensional convex geometries may also be embeddable as circles, but it is not known if this is generally true. Whether three-dimensional convex geometries are embeddable as circles is a fascinating open question, specifically (4.6) of \cite{C}.}  From a special case of our argument we also obtain Corollary~4.6 of \cite{C}, which is that any convex geometry of dimension $\leq 2$ can be embedded as intervals in $\mathbb{R}$. Intervals in $\mathbb{R}$ are a common object of study between the present paper and \cite{C} as they are both convex polygons and circles.

\begin{lemma}\label{lem:polycgeom}
If $\F:E\to\Poly$ define $\cgeom_{\F}$ to be the right side of~\eqref{defn:polycgeom}.  If $\F$ is strongly injective, meaning that if $x\neq y$ then $\F(x)$ and $\F(y)$ have no common extreme points (i.e. vertices of their convex polygons) then $\cgeom_{\F}$ is a convex geometry.
\end{lemma}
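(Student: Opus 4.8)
The plan is to recast $\cgeom_{\F}$ as the family of closed sets of a closure operator and then verify the three clauses of Definition~\ref{defn:cgeom}. For $X\subseteq E$ define $\sigma(X)=\{y\in E:\F(y)\subseteq\Hull(\F(X))\}$, where $\F(X)=\bigcup_{x\in X}\F(x)$. Since each polygon is nonempty, $\sigma(\emptyset)=\emptyset$, and unwinding~\eqref{defn:polycgeom} shows that $X\in\cgeom_{\F}$ exactly when $\sigma(X)=X$; thus $\cgeom_{\F}$ is precisely the collection of $\sigma$-closed sets. I would first check that $\sigma$ is a genuine closure operator: extensivity ($X\subseteq\sigma(X)$) is immediate, monotonicity follows because $\Hull$ is monotone under inclusion, and idempotence follows from the observation that $\Hull(\F(\sigma(X)))=\Hull(\F(X))$, since every $\F(y)$ with $y\in\sigma(X)$ already lies in $\Hull(\F(X))$. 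With this in hand, clause~(1) is trivial ($\sigma(E)=E$ and $\sigma(\emptyset)=\emptyset$) and clause~(2) is the standard fact that closed sets of any closure operator are closed under intersection, which here is immediate from monotonicity.

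The content is clause~(3), and I would reduce it to the \emph{anti-exchange property}: for $\sigma$-closed $X$ and distinct $p,q\in E\setminus X$, if $q\in\sigma(X\cup\{p\})$ then $p\notin\sigma(X\cup\{q\})$. Granting this, fix a closed $X\neq E$ and define a relation on the finite nonempty set $E\setminus X$ by $p\preceq q\iff p\in\sigma(X\cup\{q\})$. Reflexivity is extensivity, transitivity follows from monotonicity and idempotence (if $p\in\sigma(X\cup\{q\})$ and $q\in\sigma(X\cup\{r\})$ then $\sigma(X\cup\{q\})\subseteq\sigma(X\cup\{r\})$, so $p\in\sigma(X\cup\{r\})$), and antisymmetry is exactly anti-exchange. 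Hence $\preceq$ is a partial order, so $E\setminus X$ has a minimal element $e$; minimality says no $y\in E\setminus X$ with $y\neq e$ satisfies $y\in\sigma(X\cup\{e\})$, which together with extensivity gives $\sigma(X\cup\{e\})=X\cup\{e\}$. This produces the required one-element extension and establishes clause~(3).

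The main obstacle, and the only place strong injectivity is used, is the anti-exchange property itself, which I would argue through extreme points. Suppose toward a contradiction that $q\in\sigma(X\cup\{p\})$ and $p\in\sigma(X\cup\{q\})$. Writing $H=\Hull(\F(X)\cup\F(p)\cup\F(q))$, the first inclusion gives $H=\Hull(\F(X)\cup\F(p))$ and the second gives $H=\Hull(\F(X)\cup\F(q))$. Because the convex hull of a finite union of polygons is the hull of the union of their vertex sets, every extreme point of $H$ is a vertex of one of the constituent polygons; comparing the two representations, each extreme point of $H$ lies in both $\bigl(\bigcup_{x\in X}V(\F(x))\bigr)\cup V(\F(p))$ and $\bigl(\bigcup_{x\in X}V(\F(x))\bigr)\cup V(\F(q))$, where $V(\cdot)$ denotes the vertex set. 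Strong injectivity forces $V(\F(p))\cap V(\F(q))=\emptyset$, so the intersection of these two sets is just $\bigcup_{x\in X}V(\F(x))$. Thus every extreme point of $H$ is a vertex of some $\F(x)$ with $x\in X$, whence $H=\Hull(\F(X))$; but then $\F(p)\subseteq H=\Hull(\F(X))$, i.e.\ $p\in\sigma(X)=X$, contradicting $p\in E\setminus X$.

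Finally I would remark that strong injectivity is essential precisely at this step: were $\F(p)$ and $\F(q)$ to share a vertex, that common point could remain extreme in $H$ under both representations, the cancellation $V(\F(p))\cap V(\F(q))=\emptyset$ would fail, and the mutual-inclusion configuration forbidden by anti-exchange could genuinely occur.
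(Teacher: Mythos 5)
Your proof is correct, and its mathematical core coincides with the paper's: both verify condition (2) by monotonicity of hulls under intersection, both reduce condition (3) to the anti-exchange property, and both prove anti-exchange by the same mechanism — the hull of a finite union of polygons has its extreme points among the polygons' vertices, so strong injectivity ($V(\F(p))\cap V(\F(q))=\emptyset$) makes mutual inclusion impossible. The one point where you go beyond the paper is that the paper simply appeals to anti-exchange as an ``equivalent'' formulation of condition (3) (a known Edelman--Jamison fact, not reproved there), whereas you derive that implication from scratch: you package the geometry as the fixed points of the closure operator $\sigma$, order $E\setminus X$ by $p\preceq q\iff p\in\sigma(X\cup\{q\})$, and obtain the one-point extension from a minimal element. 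This is essentially the same maximal-element device the paper itself uses in proving Lemma~\ref{lem:LisomL1}, transplanted to the polygon setting, so your write-up is longer but fully self-contained, while the paper's is shorter at the price of resting on the cited equivalence.
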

\begin{proof}
Evidently both $\emptyset$ and $E$ are in $\cgeom_{\F}$, so we check conditions 2 and 3 of Defintion~\ref{defn:cgeom}. For condition 2, let $X,Y\in\cgeom_{\F}$ and $z\in E$.  If $\F(z) \subseteq\Hull(\F(X \cap Y)) \subseteq \Hull(\F(X)) \cap \Hull(\F(Y))$, then $z \in X \cap Y$. Thus $X \cap Y\in\cgeom_{\F}$ as required.

To check condition 3 it is easier to use the equivalent anti-exchange property. Suppose $X\in\cgeom_{\F}$ and there are distinct $y,z\not\in X$. Then $\Hull(\F(X)\cup\F(y)\cup\F(z))\not\subset\Hull(\F(X))$ so has a vertex $p$ from either $\F(y)$ or $\F(z)$.  If in addition $\F(z)\subset\Hull(\F(X)\cup\F(y))$ then the fact that $\F(y)$ and $\F(z)$ have no common vertices implies $p\in\F(y)$ and thus $\F(y)\not\subset\Hull(\F(X)\cup\F(z))$, verifying anti-exchange.
\end{proof}

\begin{theorem}\label{thm:PolygonEmbedding}
Any convex geometry may be embedded as convex polygons in $\mathbb{R}^2$.
\end{theorem}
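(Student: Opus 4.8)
The plan is to combine the ordering representation of Theorem~\ref{thm:EJ} with a direct construction of polygons from rays in the plane. First I would apply Theorem~\ref{thm:EJ} to obtain orderings $\succ_1,\dots,\succ_M$ generating $\cgeom$ as in Definition~\ref{defn:cgeomfromordering}. Duplicating an ordering does not change the generated geometry, since the existence clause in Definition~\ref{defn:cgeomfromordering} is insensitive to repeated orderings, so I may assume $M\ge 3$. As in Section~\ref{sec:newproof} write $j_i(x)$ for the rank of $x$ under $\succ_i$, with $j_i(x)=1$ denoting the top element.

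For the construction, fix $M$ distinct directions $d_i=(\cos\theta_i,\sin\theta_i)$ equally spaced around the circle, and a strictly increasing function $r:\{1,\dots,N\}\to[1,1+\epsilon]$ with $\epsilon>0$ to be chosen small. Set $v_i(x)=r(j_i(x))\,d_i$ and $\F(x)=\Hull\{v_i(x):1\le i\le M\}$, a convex polygon. The point of making $r$ increasing in rank is that a low-ranked (bad) element pokes out furthest along $d_i$, while the narrow band $[1,1+\epsilon]$ controls cross-contributions between different directions. I would then establish three geometric facts by taking $\epsilon$ small relative to the angular gaps $2\pi/M$: (i) every $v_i(x)$ is a vertex of $\F(x)$ and, since $M\ge 3$ equally spaced directions positively span the plane, the origin lies in the interior of each $\F(x)$; (ii) for any $X\subseteq E$ the support of $\Hull(\F(X))$ in direction $d_i$ equals $\max_{x\in X} r(j_i(x))$, because $\langle v_j(x),d_i\rangle=r(j_i(x))\cos(\theta_j-\theta_i)$ is maximized at $j=i$ once $\epsilon$ is small enough that $(1+\epsilon)\cos(2\pi/M)\le 1$; and (iii) consequently $v_i(y)\in\Hull(\F(X))$ iff $r(j_i(y))\le \max_{x\in X} r(j_i(x))$, the forward direction being immediate from (ii) and the reverse because $v_i(y)$ then lies on the segment from the interior origin to the vertex $v_i(x^*)$ realizing the maximum.

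Finally I would assemble the equivalence. Since a convex polygon is contained in a convex set iff all of its vertices are, fact (iii) gives that $\F(y)\subseteq\Hull(\F(X))$ iff for every $i$ there is $x\in X$ with $j_i(y)\le j_i(x)$, that is $y\succeq_i x$. For $y\notin X$ antisymmetry of the orderings upgrades this to $y\succ_i x$, so $\F(y)\not\subseteq\Hull(\F(X))$ iff there is some $i$ with $x\succ_i y$ for all $x\in X$. Comparing with Definition~\ref{defn:cgeomfromordering} shows that the right-hand side of~\eqref{defn:polycgeom} equals $\cgeom$; in other words $\cgeom_\F=\cgeom$. Strong injectivity holds since distinct elements have distinct ranks and hence distinct radii in each direction, so $\F$ has no repeated vertices and Lemma~\ref{lem:polycgeom} applies as a consistency check that $\cgeom_\F$ is a convex geometry.

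The main obstacle is the quantitative coordination in facts (ii)--(iii): one must choose the radius band narrow enough against the fixed angular spacing so that the reach of $\Hull(\F(X))$ along each ray $d_i$ is governed solely by the $i$-th vertices, while still fitting $N$ distinct increasing radii into the band. Guaranteeing simultaneously that every $v_i(x)$ remains extreme and that the origin stays interior to every $\F(x)$ (which is what forces $M\ge 3$ and is handled by the duplication step) is part of the same balancing act, and is where the geometric care of the argument lies; once those estimates are in place the translation back to the ordering condition is routine bookkeeping.
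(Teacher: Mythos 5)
Your proposal is correct and follows essentially the same construction as the paper's proof: vertices placed on equally spaced rays with radius increasing in rank, scaled so that the reach of $\Hull(\F(X))$ along each ray is governed solely by that ray's own vertices, and then a direct translation of $\F(y)\subseteq\Hull(\F(X))$ into the ordering condition of Definition~\ref{defn:cgeomfromordering}. The differences are only cosmetic: the paper uses a large additive constant $C_{n}(E)$ where you use a narrow radial band $[1,1+\epsilon]$, and it treats convex dimension $1$ separately via nested intervals where you duplicate orderings to force $M\geq 3$ (a choice that also sidesteps the degenerate antipodal-ray case $M=2$).
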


\begin{proof}
A convex geometry of convex dimension $1$ is specified by a single order $x_1 \succ \ldots \succ x_n$. This convex geometry can be embedded in $\mathbb{R}$ as a nested set of intervals where $\F(x_1) \subset \ldots \subset \F(x_n)$.

Take a convex geometry of convex dimension $n\geq 2$ and for $1\leq i\leq n$ let the function \mbox{$j_{i}:E\to\mathbb{N}$} be the ranking according to the $i^{\text{th}}$ order as in Section~\ref{sec:newproof}. Formally, recall that for each ordering $\succ_i$, the set $E$  can be arranged as $x_{i1}\succ_{i}x_{i2}\succ_{i}\dotsm\succ_{i}x_{iN}$ and $j_{i}(x)$ is the unique choice such that $x=x_{ij_{i}(x)}$. That is, $j_i(x)$ denotes $x$'s place according to the $i^{\text{th}}$ ordering.  For each $1\leq i \leq n$, let \mbox{$v_{i}= (\cos(2\pi i/n),\sin(2\pi i /n)) \in\mathbb{R}^{2}$} and define $F_i(x) =(C_{n}(E)+ j_i(x) )v_{i}$, where \linebreak \mbox{$C_{n}=2|E|/(|\sec(2\pi/n)|-1)$}.  Each $F_i(x)$ is a point on a ray at angle $2 \pi i /n$  and will define the $i^{\text{th}}$ vertex for the convex polygon $\F(x)$. That is, we set $\F(x) = \Hull(F_1(x),\ldots,F_n(x))$.  Clearly the origin $(0,0)\in\Hull(\F(X))$.


We now show  $\cgeom_{\F}=\cgeom$.  Suppose $X\subset E$ is not in $\cgeom$.  Then there is $z \notin X$ so that for each $i$, there exists $x_i \in X$ with $z \succ_i x_i$.  Thus $j_{i}(x_{i})>j_{i}(z)$ and $|F_i(x_i)| > |F_i(z)|$ for all $i$.  Using that $(0,0)\in\F(x)$ it easily follows that $\F(z)= \Hull\bigl(\{F_1(x_1),\dotsc,F_n(x_n)\}\bigr) \subseteq\Hull(\F(x_1)\cup\dotsm\cup\F(x_n)) \subseteq \Hull(\F(X))$.  But $z\notin X$ and $F(z)\subseteq\Hull\F(X)$ implies $X\notin \cgeom_{\F}$, so $\cgeom_{\F}\subset\cgeom$.

Conversely, take a set $X \in\cgeom$ and $z\notin X$. Then there is some $i$ so that for all $x\in X$ we have $x \succ_i z$ and therefore $j_{i}(z)\geq j_{i}(x)+1$.  Rotate so $i=n$ and let $r=\max\{|F_{n}(x)|:x\in X\}$.  It is clear that
\begin{equation*}
	\Hull(\F(X))\subset \Hull \bigl( \{ (C_{n}(E)+|E|)v_{1},\dotsc,  (C_{n}(E)+|E|)v_{n-1},rv_{n}\} \bigr),
	\end{equation*}
but the first component of any $ (C_{n}(E)+|E|)v_{k}$ does not exceed $(C_{n}(E)+|E|)\cos(2\pi/n)\leq \linebreak (C_{n}(E)+|E|)|\cos(2\pi/n)|\leq C_{n}<C_{n}+j_{n}(z)$. The first component of $rv_{n}$ is $r<C_{n}+j_{n}(z)$, so $F_{n}(z)\not\in\Hull(\F(X))$ and $\F(z)\not\subset\Hull(\F(X))$.  Thus $X\in\cgeom_{\F}$ and $\cgeom=\cgeom_{\F}$.
%
%
%
\end{proof}

The proof of the above theorem demonstrates that any convex geometry of dimension $2$ may be embedded into the real line as intervals. 

\newpage

\bibliography{bibfile}
\bibliographystyle{plainnat}

\end{document}